\newtheorem{proposition}{Proposition}
\newtheorem{definition}{Definition}
\newtheorem{lemma}{Lemma}
\newtheorem{corollary}{Corollary}
\newtheorem{example}{Example}
\newcommand{\dens}{\operatorname{dens}}
\newcommand{\asymdens}{\operatorname{d}}
\newcommand{\dirichlet}{\operatorname{\partial}}
\newcommand{\supv}{\operatorname{sup_v}}
\newcommand{\suph}{\operatorname{sup_h}}
\newcommand{\positiveset}{{\mathbb{P}}}
\newcommand{\lcm}{\operatorname{lcm}}
\newcommand{\UPPER}[2]{{U}_{#1,#2}}
\newcommand{\LOWER}[2]{{L}_{#1,#2}}
\newcommand{\ROW}[2]{{#1_{\ast,#2}}}
\newcommand{\COL}[2]{{#1_{#2,\ast}}}
\title{%
An Extension of the Dirichlet Density for Sets of Gaussian Integers
}
\author{%
L.~C.~R\^ego%
\thanks{%
L.~C.~R\^ego is with 
the
Departamento de Estat\'{\i}stica, 
Universidade Federal de Pernambuco.
Email: 
\protect\url{leandro@de.ufpe.br}
}
\and
R.~J.~Cintra%
\thanks{%
R.~J.~Cintra is with 
the Signal Processing Group,
Departamento de Estat\'{\i}stica, 
Universidade Federal de Pernambuco.
Part of
second
author's work was done during his sabbatical at the
University of Calgary, Calgary, Canada.
Email: 
\protect\url{rjdsc@de.ufpe.br}
}
}
\date{}
\newcommand{\myabstract}{%
Several measures for the density of sets of integers have been proposed,
such as the asymptotic density, the Schnirelmann density, and the Dirichlet density.
There has been some work in the literature on extending some of these concepts of density to higher dimensional sets of integers.
In this work, we propose an extension of the Dirichlet density for sets of Gaussian integers and
investigate some of its properties.
}
\newcommand{\mykeywords}{%
Gaussian integers, Dirichlet density
}
\begin{document}

\makeatletter
\if@twocolumn

\twocolumn[%
  \maketitle
  \begin{onecolabstract}
    \myabstract
  \end{onecolabstract}
  \begin{center}
    \small
    \textbf{Keywords}
    \\\medskip
    \mykeywords
  \end{center}
  \bigskip
]
\saythanks

\else

  \maketitle
  \begin{abstract}
    \myabstract
  \end{abstract}
  \begin{center}
    \small
    \textbf{Keywords}
    \\\medskip
    \mykeywords
  \end{center}
  \bigskip
  \onehalfspacing
\fi

\section{Introduction}

Several measures for the density of sets of integers have been discussed in the
literature~\cite{sun2008density,bell2006,duncan1968,duncan1970,erdos1948,davenport1951,ahlswede1997}.
Presumably the most employed
of such measures is
the asymptotic density, also referred to as natural density~\cite{niven1951asymptotic,sun2008density}.
For a given set of integers $A$,
its asymptotic density
is expressed by
\begin{align*}
\asymdens(A) =
\lim_{n\to\infty}
\frac{\|A \cap \{1, 2, 3, \ldots, n\}\|}{n},
\end{align*}
provided that such a limit does exist.
The symbol $\|\cdot\|$ returns the cardinality of its argument.

In~\cite{bell2006}, Bell and Burris
bring an ample exposition on the Dirichlet density,
which is defined as follows.

\begin{definition}
The Dirichlet density of a subset $A$ of the positive integers
is given by
\begin{align*}
\dirichlet(A)
&\triangleq
\lim_{s\downarrow1}
\frac{\sum_{n\in A}\frac{1}{n^s}}
{\zeta(s)},
\end{align*}
if the limit does exist, for real $s>1$.
The quantity $\zeta(\cdot)$ denotes the Riemann zeta function~\cite{grad1965}.
\end{definition}
If the asymptotic density is well defined,
then the Dirichlet density does also exist and
assumes the same value~\cite[p.~10]{bateman2004}.
Since the converse is not always true,
the Dirichlet density is a more encompassing tool
when compared to the asymptotic density~\cite[p.~11]{bateman2004}.
Dirichlet density also admits lower and upper versions,
which have been explored along with other densities
to characterize primitive sets~\cite{ahlswede1996,ahlswede1999,ahlswede2004}.

Gaussian integers are simply complex numbers of the form $m + in$, where $m$ and $n$ are integers.
Despite the considerable amount
of development addressing densities for sets of positive integers~\cite{fuchs1990},
densities for sets of Gaussian integers appear to be an overlooked topic.
However, a seminal paper by Cheo~\cite{cheo1951}
investigated the question,
suggesting an extension of the Schnirelmann density~\cite{duncan1968,duncan1970}.
Such extended definition applies
to subsets of the nonzero Gaussian integers
inclusively confined
in the first quadrant of the complex plane.

Generalizations of Schnirelmann density for the $n$-dimensional case were proposed in~\cite{kvarda1963,kvarda1966}.
Additionally, a modified Schnirelmann density was introduced in~\cite{stalley1955}
and was generalized in~\cite{freedman1970} years later. In a comparable venue,
Freedman~\cite{freedman1969,freedman1973} advanced the concept of asymptotic density to higher dimensions.

In this context,
the aim of the present work is to advance a method for evaluating
the density of sets of Gaussian integers.
To address this problem,
a density based on Dirichlet generating functions is proposed.

For ease of notation,
henceforth we identify
a Gaussian integer $m+i n$ with the pair of integers $(m,n)$.
All considered Gaussian integers and their sets are defined in $\positiveset^2$,
where $\positiveset$ is the set of strictly positive integers.

\section{Definition and General Properties}

The Gaussian integers can be realized as points over a square lattice in the complex plane.
The square lattice is composed by an infinite array of Gaussian integers,
set up in rows and columns. In addition,
each lattice row or column can furnish sets of integers according to
the following constructions:
$\ROW{A}{n} = \{ m \in \positiveset : (m,n) \in A \}$
and
$\COL{A}{m} = \{ n \in \positiveset : (m,n) \in A \}$.

Our goal is to investigate
the properties of the following density for Gaussian integers,
which we show to be a generalization of the Dirichlet density for sets of integers.

\begin{definition}
Let $A$ be a set of Gaussian integers.
Admit $I_\ROW{A}{n}(\cdot)$ and $I_\COL{A}{m}(\cdot)$ to be the indicator functions of the sets
$\ROW{A}{n}$ and $\COL{A}{m}$, for $m,n\in\positiveset$, respectively.
The proposed density for $A$ is given by
\begin{align*}
\dens(A)
\triangleq
\lim_{s\downarrow1}
\frac{1}{\zeta^2(s)}
\sum_{m=1}^\infty
\sum_{n=1}^\infty
\frac{I_\ROW{A}{n}(m) I_\COL{A}{m}(n)}{(mn)^s},
\end{align*}
provided that the limit exists.
\end{definition}

\noindent
From now on,
we only consider sets whose referred densities are well-defined, i.e.,
the implied limits exist.
Thus, we restrain ourselves of indicating in every instance that the results are valid only when
the discussed limits exist.
In account of the proposed definition,
a series of consequences is listed below.

\begin{proposition}
\label{proposition.facts}
Let $A$ and $B$ be two sets of Gaussian integers.
The following assertions hold true:
\begin{enumerate}[(i)]
\item
$\dens(A) \geq 0$.
\item
$\dens(\positiveset^2) = 1$.
\item
if $A\cap B = \varnothing$, then $\dens(A\cup B)=\dens(A)+\dens(B)$.
\item
$\dens(\varnothing) = 0$.
\item 
$\dens(B-A) = \dens(B) - \dens(A\cap B)$,
where $B-A$ is the relative complement of $A$ in $B$.
\item 
$\dens(A^c) = 1 - \dens(A)$,
where $A^c$ is the complement of $A$.
\item if $A\subset B$, then $\dens(A) \leq \dens(B)$.
\item 
$
\dens(A \cup B) = \dens(A) + \dens(B) - \dens(A \cap B)
$.
\end{enumerate}
\end{proposition}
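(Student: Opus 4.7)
The plan is to first reduce the somewhat cumbersome defining expression to a transparent form, then to establish the four core assertions (i)--(iv) directly from that form, and finally to derive (v)--(viii) by elementary set algebra together with additivity on disjoint unions.

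The key observation is that, by the very definitions of $\ROW{A}{n}$ and $\COL{A}{m}$, both $I_\ROW{A}{n}(m)$ and $I_\COL{A}{m}(n)$ equal $I_A(m,n)$. Their product therefore collapses to $I_A(m,n)$ itself, so the definition simplifies to
\begin{align*}
\dens(A) = \lim_{s\downarrow 1} \frac{1}{\zeta^2(s)} \sum_{(m,n)\in A} \frac{1}{(mn)^s}.
\end{align*}
With this reduction in hand, (i) is immediate since every summand is nonnegative and $\zeta(s)>0$, and (iv) is immediate because the sum over $A=\varnothing$ vanishes. For (ii), we specialise $A=\positiveset^2$ and recognise the double sum as $\bigl(\sum_m m^{-s}\bigr)\bigl(\sum_n n^{-s}\bigr) = \zeta^2(s)$, so the ratio is identically $1$ for $s>1$ and its limit is $1$.

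For (iii), disjointness of $A$ and $B$ gives $I_{A\cup B} = I_A + I_B$ pointwise, so the series splits into the two corresponding pieces; dividing by $\zeta^2(s)$ and invoking the linearity of the limit (which exists on each piece by hypothesis) yields $\dens(A\cup B) = \dens(A) + \dens(B)$. The remaining four parts are purely formal consequences. Part (v) follows from the disjoint decomposition $B = (B-A)\cup (A\cap B)$ together with (iii); part (vi) is (v) with $B=\positiveset^2$ combined with (ii); part (vii) follows from the disjoint decomposition $B = A \cup (B-A)$ when $A\subset B$, using (iii) and (i); and part (viii) follows from the tripartite disjoint decomposition $A\cup B = (A-B)\cup (A\cap B) \cup (B-A)$, applying (iii) repeatedly and simplifying with (v).

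The proof needs no analytic information about $\zeta(s)$ near $s=1$ beyond positivity, so there is no genuine analytic obstacle. The only point that demands some care, and arguably the crux of the argument, is the standing hypothesis that all densities involved exist: each splitting of a limit across a disjoint union implicitly requires that the two smaller limits exist as well. Under the global convention stated after the definition, this is handled by reading every equation as an equality conditional on the relevant densities being well defined, and all manipulations above are then routine.
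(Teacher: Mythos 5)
Your proof is correct and takes the same route the paper intends: the paper's proof is simply ``Follows directly from the definition,'' and your argument is the direct verification that this one-liner presupposes, including the useful observation that the product of indicators collapses to $I_A(m,n)$ and the careful handling of which limits must be assumed to exist. Nothing to add.
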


\begin{proof}
Follows directly from the definition.
\end{proof}

The
first
three properties stated in the previous proposition are
the same conditions that form an axiomatic definition of a
probability measure,
except for the $\sigma$-additivity axiom.

\begin{proposition}[Cartesian Product]
\label{proposition.cartesian.product}
Let $A$ and $B$ be two subsets of $\positiveset$.
Then the density of the Cartesian product
$A \times B$ satisfies
\begin{align*}
\dens(A\times B)=\dirichlet(A)\dirichlet(B).
\end{align*}
\end{proposition}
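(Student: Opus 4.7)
The plan is to unpack the definition of $\dens(A\times B)$ and exploit the product structure of $A\times B$ to factor the double series into two independent Dirichlet series, one for $A$ and one for $B$. The whole argument is essentially bookkeeping; the key observation is how the row and column indicator functions simplify when the set has product form.

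First I would compute the row and column sections of $A\times B$. For any $n\in\positiveset$, the set $\ROW{(A\times B)}{n}$ equals $A$ when $n\in B$ and is empty otherwise; symmetrically, $\COL{(A\times B)}{m}$ equals $B$ when $m\in A$ and is empty otherwise. Consequently
\[
I_{\ROW{(A\times B)}{n}}(m)\, I_{\COL{(A\times B)}{m}}(n) \;=\; I_A(m)\,I_B(n)\cdot I_A(m)\,I_B(n) \;=\; I_A(m)\,I_B(n),
\]
since indicator functions are idempotent.

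Next I would substitute this identity into the definition of $\dens$, which turns the double sum into a product of two Dirichlet series:
\[
\sum_{m=1}^{\infty}\sum_{n=1}^{\infty} \frac{I_A(m)\,I_B(n)}{(mn)^{s}}
\;=\;
\left(\sum_{m\in A}\frac{1}{m^{s}}\right)\!\left(\sum_{n\in B}\frac{1}{n^{s}}\right).
\]
Dividing by $\zeta^{2}(s)=\zeta(s)\cdot\zeta(s)$ and separating the two factors, I would then pass to the limit $s\downarrow 1$. Because both of the limits $\lim_{s\downarrow 1}\zeta(s)^{-1}\sum_{m\in A}m^{-s}$ and $\lim_{s\downarrow 1}\zeta(s)^{-1}\sum_{n\in B}n^{-s}$ exist by hypothesis (they are $\dirichlet(A)$ and $\dirichlet(B)$), the limit of the product equals the product of the limits, giving $\dens(A\times B)=\dirichlet(A)\,\dirichlet(B)$.

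There is no real obstacle here; the only thing to be careful about is the order of the limit and the factorization, which is immediate since each factor converges separately. If one wanted to be fully explicit, one could note that convergence of $\sum_{m\in A}m^{-s}/\zeta(s)$ as $s\downarrow1$ together with $\zeta(s)\to\infty$ forces both factors to be bounded near $s=1$, so the product limit is unambiguous.
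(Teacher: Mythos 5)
Your proposal is correct and follows essentially the same route as the paper: both unpack the definition, factor the double sum over $A\times B$ into the product of the two Dirichlet series, divide by $\zeta^2(s)$, and pass to the limit as a product of limits. Your version merely makes explicit the indicator-function bookkeeping and the justification for splitting the limit, which the paper leaves implicit.
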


\begin{proof}
We have that
\begin{align*}
\dens(A\times B)
&=
\lim_{s\downarrow1}
\frac{\sum_{(m,n)\in A\times B}\frac{1}{(mn)^s}}
{\zeta^2(s)}
\\
&=
\lim_{s\downarrow1}
\frac{\sum_{m\in A}\frac{1}{m^s}\sum_{n\in B}\frac{1}{n^s}}
{\zeta^2(s)}
\\
&=
\dirichlet(A)\dirichlet(B).
\end{align*}
\end{proof}

\begin{corollary}[Dirichlet Density]
Let $A$ be a set of positive integers.
Then
$\dens(A\times\positiveset) = \dirichlet(A)$.
\end{corollary}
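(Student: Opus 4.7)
The plan is to derive this corollary as an immediate consequence of Proposition~\ref{proposition.cartesian.product}. Since $\positiveset$ itself is a subset of $\positiveset$, I can directly specialize the Cartesian product formula by taking $B = \positiveset$, which yields
\begin{align*}
\dens(A \times \positiveset) = \dirichlet(A)\,\dirichlet(\positiveset).
\end{align*}

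The only remaining step is to verify that $\dirichlet(\positiveset) = 1$. This follows straight from the definition of Dirichlet density, since
\begin{align*}
\dirichlet(\positiveset) = \lim_{s\downarrow 1}\frac{\sum_{n\in\positiveset}\frac{1}{n^s}}{\zeta(s)} = \lim_{s\downarrow 1}\frac{\zeta(s)}{\zeta(s)} = 1,
\end{align*}
so substituting back gives $\dens(A \times \positiveset) = \dirichlet(A)$, as claimed.

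There is no genuine obstacle here; the corollary is essentially a sanity check confirming that the newly proposed density $\dens$ reduces to the classical Dirichlet density when a set of positive integers is embedded in $\positiveset^2$ by taking a full column of rows. The only thing worth being careful about is implicit: one must be operating under the standing assumption, stated right after the definition of $\dens$, that the relevant limits exist; given that $\dirichlet(A)$ is assumed to exist, the product structure ensures $\dens(A\times\positiveset)$ is well-defined as well.
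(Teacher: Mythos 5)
Your proof is correct and follows exactly the paper's route: specialize Proposition~\ref{proposition.cartesian.product} with $B=\positiveset$ and use $\dirichlet(\positiveset)=1$ (which the paper cites rather than computes, but your one-line verification is the same fact).
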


\begin{proof}
This result is a direct consequence of the fact that $\dirichlet(\positiveset)=1$~\cite{fuchs1990}.
\end{proof}

Given any set $A$ of Gaussian integers,
let the horizontal and vertical axis sections be
denoted by
$\suph(A) = \bigcup_{n=1}^\infty \ROW{A}{n}$
and
$\supv(A) = \bigcup_{m=1}^\infty \COL{A}{m}$,
respectively.

\begin{proposition}
Let $A$ be a set of Gaussian integers.
If $\dirichlet(\suph(A))=0$ or $\dirichlet(\supv(A))=0$,
then
$\dens(A)=0$.
\end{proposition}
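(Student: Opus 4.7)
The plan is to bound $A$ from above by a Cartesian product whose density is already known to vanish, and then apply monotonicity together with non-negativity.

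First I would observe the elementary containment $A \subseteq \suph(A) \times \positiveset$. Indeed, if $(m,n) \in A$ then $m \in \ROW{A}{n} \subseteq \bigcup_{k} \ROW{A}{k} = \suph(A)$, while $n \in \positiveset$ trivially. The symmetric containment $A \subseteq \positiveset \times \supv(A)$ is verified in exactly the same way.

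Next, in the case $\dirichlet(\suph(A)) = 0$, I would apply the monotonicity property (vii) of Proposition~\ref{proposition.facts} together with the Corollary that identifies $\dens(B \times \positiveset)$ with $\dirichlet(B)$ for any $B \subseteq \positiveset$. This gives
\begin{align*}
0 \leq \dens(A) \leq \dens(\suph(A) \times \positiveset) = \dirichlet(\suph(A)) = 0,
\end{align*}
where the leftmost inequality is property (i). The case $\dirichlet(\supv(A)) = 0$ is handled identically using $A \subseteq \positiveset \times \supv(A)$ and the analogous identity $\dens(\positiveset \times B) = \dirichlet(B)$, which follows from Proposition~\ref{proposition.cartesian.product} and the fact that $\dirichlet(\positiveset) = 1$.

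There is essentially no obstacle here; the only mild subtlety is that monotonicity (property (vii)) was derived in Proposition~\ref{proposition.facts} under the blanket assumption that all densities in sight exist, so one should note that $\dens(\suph(A) \times \positiveset)$ and $\dens(\positiveset \times \supv(A))$ are both well-defined precisely because the relevant Dirichlet densities were assumed to exist (with value zero) in the hypothesis. With this observation the proof is a two-line application of the results already established.
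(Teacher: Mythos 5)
Your argument is correct and is essentially identical to the paper's own proof: both establish the containment $A \subseteq \suph(A) \times \positiveset$, apply monotonicity, and then use the Cartesian product formula together with $\dirichlet(\positiveset)=1$ to conclude, with the symmetric case handled analogously. Your added remark about the well-definedness of the intermediate densities is a reasonable (if minor) point of care that the paper glosses over.
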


\begin{proof}
For instance,
assume that $\dirichlet(\suph(A))=0$.
Note that $A\subset \suph(A) \times \positiveset$.
Due to the monotonicity property,
it follows that
$\dens(A) \leq \dens(\suph(A)\times\positiveset)$.
Moreover,
the property of the density of Cartesian products allows us to write
$\dens(A) \leq \dirichlet(\suph(A))\dirichlet(\positiveset)$.
Applying the hypothesis, the result follows.
The proof would be analogous in the case that $\dirichlet(\supv(A))=0$.
\end{proof}

\begin{corollary}[Finite Axis Section]
If a set $A$ of Gaussian integers
has a finite axis section,
then $\dens(A)=0$.
\end{corollary}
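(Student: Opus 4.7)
The plan is to reduce this corollary to the preceding proposition, which already tells us that $\dens(A)=0$ whenever one of the axis sections has Dirichlet density zero. So the only thing left to verify is that a finite subset of $\positiveset$ has Dirichlet density zero, and then apply the proposition to the axis section that is assumed to be finite.

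First I would fix notation: suppose, without loss of generality, that $\suph(A)$ is finite (the argument for $\supv(A)$ being finite is symmetric). Let $F = \suph(A)$, so $F \subset \positiveset$ with $\|F\| < \infty$. I would then compute the Dirichlet density of $F$ directly from the definition: for $s>1$, the numerator $\sum_{n\in F} n^{-s}$ is a finite sum of continuous functions of $s$, and as $s\downarrow 1$ it tends to the finite value $\sum_{n\in F} n^{-1}$. The denominator $\zeta(s)$ diverges to $+\infty$ as $s\downarrow 1$. Hence
\begin{align*}
\dirichlet(F) = \lim_{s\downarrow 1}\frac{\sum_{n\in F} n^{-s}}{\zeta(s)} = 0.
\end{align*}

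Having established $\dirichlet(\suph(A))=0$, the conclusion $\dens(A)=0$ follows immediately from the previous proposition. The symmetric case, where $\supv(A)$ is assumed finite, is handled by the same two-line argument applied to $\supv(A)$ in place of $\suph(A)$.

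There is no real obstacle here: the work has all been done in the preceding proposition and in the elementary fact that $\zeta(s)\to\infty$ as $s\downarrow 1$. The only subtlety worth flagging is that the corollary's hypothesis is about \emph{one} axis section being finite (not both), so I would make sure the write-up explicitly invokes the ``or'' in the proposition rather than needing any joint hypothesis.
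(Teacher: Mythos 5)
Your proposal is correct and follows the same route as the paper: the paper's proof simply observes that any finite set of integers has null Dirichlet density (citing the literature for this fact) and implicitly applies the preceding proposition. The only difference is that you spell out the elementary computation $\lim_{s\downarrow 1}\sum_{n\in F}n^{-s}/\zeta(s)=0$ rather than citing it, which is a harmless (and arguably welcome) addition.
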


\begin{proof}
It is enough to observe that any finite set of integers has null Dirichlet density~\cite{fuchs1990}.
\end{proof}
 \noindent
 As a consequence,
 a finite set of Gaussian integers has null density, since both of its axis sections are finite.
 In particular, the density of a singleton is zero.
 On the other hand,
 nonzero density subsets must have infinite axis sections.

 Let
 $\UPPER{m_0}{n_0} = \{(m,n)\in \positiveset^2:m\geq m_0\text{ and }n\geq n_0\}$
 and
 $\LOWER{m_0}{n_0} = \{(m,n)\in \positiveset^2:m < m_0\text{ and } n < n_0\}$.
 Next proposition states that,
 for density evaluation,
 the only relevant set elements
 are those
 located in the region defined by $\UPPER{m_0}{n_0}$ for any choice of $m_0$ and $n_0$.
 This means that the ``weight'' of the set is located on its ``tail''.
 Nevertheless,
 we need the result of the following lemma before.

 \begin{lemma}
 The set $\UPPER{m_0}{n_0}$ has unit density.
 \end{lemma}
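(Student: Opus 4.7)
The plan is to sidestep the direct evaluation of the defining limit by passing to the complement and leveraging the algebraic properties already at our disposal. First I would decompose
\begin{align*}
\UPPER{m_0}{n_0}^c = \bigl(\{1,\ldots,m_0-1\}\times\positiveset\bigr)\cup\bigl(\positiveset\times\{1,\ldots,n_0-1\}\bigr),
\end{align*}
writing the complement as the union of two Cartesian products whose pairwise intersection is the finite rectangle $\{1,\ldots,m_0-1\}\times\{1,\ldots,n_0-1\}$, which is itself a Cartesian product.

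Next I would invoke Proposition~\ref{proposition.cartesian.product} to express each of these three densities as a product of Dirichlet densities on subsets of $\positiveset$. Since any finite subset of $\positiveset$ has null Dirichlet density while $\dirichlet(\positiveset)=1$, each such product vanishes. The inclusion-exclusion identity in Proposition~\ref{proposition.facts}(viii) then yields $\dens(\UPPER{m_0}{n_0}^c)=0$, and the complement rule in Proposition~\ref{proposition.facts}(vi) finishes the argument, giving $\dens(\UPPER{m_0}{n_0})=1-0=1$.

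The only delicate point is ensuring that all auxiliary densities appearing along the way are well defined, so that the complement and inclusion-exclusion identities may legitimately be applied. This follows automatically from Proposition~\ref{proposition.cartesian.product}, whose defining limit factors cleanly as a product of two convergent Dirichlet density limits. Accordingly, no substantive analytical obstacle arises: the argument reduces to bookkeeping built on top of the already-established Cartesian product formula and the vanishing of Dirichlet densities of finite sets. An alternative, equally short route would be to evaluate the limit directly by noting that the sum defining $\dens(\UPPER{m_0}{n_0})$ factors as $\bigl(\zeta(s)-\sum_{m=1}^{m_0-1}m^{-s}\bigr)\bigl(\zeta(s)-\sum_{n=1}^{n_0-1}n^{-s}\bigr)/\zeta^2(s)$, which tends to $1$ as $s\downarrow1$ because the two finite partial sums remain bounded while $\zeta(s)\to\infty$; I prefer the complement approach since it reuses the structural results of the paper without recomputation.
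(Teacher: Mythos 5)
Your proposal is correct and follows essentially the same route as the paper: pass to the complement, decompose it as the union of the two strips $\{1,\ldots,m_0-1\}\times\positiveset$ and $\positiveset\times\{1,\ldots,n_0-1\}$, apply inclusion-exclusion, and observe that each piece has null density because one of its factors is finite. The only cosmetic difference is that the paper cites its Finite Axis Section corollary where you invoke the Cartesian product formula directly, but that corollary is itself proved from the same formula.
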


 \begin{proof}
 Let $\UPPER{m_0}{n_0}^c$ be the complement of $\UPPER{m_0}{n_0}$.
 Therefore,
 the set $\positiveset^2$ can be partitioned into
 $\positiveset^2 = \UPPER{m_0}{n_0} \cup \UPPER{m_0}{n_0}^c$.
 Then,
 it follows that
 $\dens(\UPPER{m_0}{n_0}) = 1 - \dens(\UPPER{m_0}{n_0}^c)$.
 Notice also that $\UPPER{m_0}{n_0}^c = \LOWER{m_0}{\infty} \cup \LOWER{\infty}{n_0}$.
The union property
allows us to state that
 $\dens(\UPPER{m_0}{n_0}^c) = \dens(\LOWER{m_0}{\infty}) + \dens(\LOWER{\infty}{n_0}) - \dens(\LOWER{m_0}{n_0})$.
 Since
 $\LOWER{m_0}{\infty}$,
 $\LOWER{\infty}{n_0}$,
 and
 $\LOWER{m_0}{n_0}$
 have each at least one finite axis section,
 it follows that $\dens(\UPPER{m_0}{n_0}^c) = 0$.

 \end{proof}

 \begin{proposition}[Heavy Tail]
 \label{propositon.heavy}
 Let $A$ be a set of Gaussian integers.
 Then,
 for any two given nonnegative integers $m_0$ and $n_0$,
 we have
 \begin{align*}
 \dens(A) = \dens(A \cap \UPPER{m_0}{n_0}).
 \end{align*}
 \end{proposition}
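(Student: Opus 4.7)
The plan is to decompose $A$ into a disjoint union involving the set $\UPPER{m_0}{n_0}$ and its complement, then use the preceding lemma together with the elementary properties established in Proposition~\ref{proposition.facts}.

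First I would write $A$ as the disjoint union $A = (A \cap \UPPER{m_0}{n_0}) \cup (A \cap \UPPER{m_0}{n_0}^c)$, so that by the finite additivity property (iii) of Proposition~\ref{proposition.facts},
\begin{align*}
\dens(A) = \dens(A \cap \UPPER{m_0}{n_0}) + \dens(A \cap \UPPER{m_0}{n_0}^c).
\end{align*}
Thus the proposition reduces to showing that $\dens(A \cap \UPPER{m_0}{n_0}^c) = 0$.

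Second, I would invoke monotonicity (property (vii)) to obtain $\dens(A \cap \UPPER{m_0}{n_0}^c) \leq \dens(\UPPER{m_0}{n_0}^c)$. The preceding lemma asserts $\dens(\UPPER{m_0}{n_0}) = 1$, so the complement property (vi) yields $\dens(\UPPER{m_0}{n_0}^c) = 0$. Combined with nonnegativity (property (i)), this forces $\dens(A \cap \UPPER{m_0}{n_0}^c) = 0$, which completes the argument.

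The only potential snag is a bookkeeping issue: for this chain of inequalities to be meaningful, I must know that the densities of $A \cap \UPPER{m_0}{n_0}$ and $A \cap \UPPER{m_0}{n_0}^c$ actually exist. Since the paper has adopted the standing convention (immediately after the definition of $\dens$) that only sets with well-defined density are under consideration, and since the lemma already grants existence for $\UPPER{m_0}{n_0}$ and hence for its complement, this concern is dispatched by the convention. No further computation with the Dirichlet series is required; the entire proof is formal manipulation of the additive/monotone structure inherited from the preceding results.
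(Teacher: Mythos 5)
Your proposal is correct and follows essentially the same route as the paper: decompose $A$ over $\UPPER{m_0}{n_0}$ and its complement, apply finite additivity, then kill the second term by monotonicity against $\dens(\UPPER{m_0}{n_0}^c)=0$. The only cosmetic difference is that you recover $\dens(\UPPER{m_0}{n_0}^c)=0$ from the lemma's statement via the complement property, whereas the paper reads it off directly from the lemma's proof.
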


 \begin{proof}
 Observe that $A = A \cap (\UPPER{m_0}{n_0} \cup \UPPER{m_0}{n_0}^c)=(A\cap\UPPER{m_0}{n_0}) \cup (A\cap\UPPER{m_0}{n_0}^c)$.
 Since we have a partition of $A$,
 it follows that
 $\dens(A) = \dens(A\cap\UPPER{m_0}{n_0}) + \dens(A\cap \UPPER{m_0}{n_0}^c)$.
 But, $A\cap \UPPER{m_0}{n_0}^c \subset \UPPER{m_0}{n_0}^c$,
 then
 $\dens(A\cap \UPPER{m_0}{n_0}^c) \leq \dens(\UPPER{m_0}{n_0}^c) = 0$.
 \end{proof}

 \begin{proposition}[Axis Independence]
 If there is a pair $(m_0,n_0)$ such that,
 for every $m\geq m_0$ and $n\geq n_0$,
 the functions $I_\COL{A}{m}(n)$ and $I_\ROW{A}{n}(m)$
 are independent of $m$ and $n$, respectively,
 then
 \begin{align*}
 \dens(A)
 =
 \dirichlet(\COL{A}{m_0})
 \dirichlet(\ROW{A}{n_0}).
 \end{align*}
 \end{proposition}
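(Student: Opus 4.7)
The plan is to combine the Heavy Tail proposition with the Cartesian Product proposition. I would first use Proposition~\ref{propositon.heavy} to replace $A$ with $A\cap\UPPER{m_0}{n_0}$ without changing the density, so that we can restrict attention to the region where the axis independence hypothesis has content.

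Next I would unpack the independence condition. Since $I_\COL{A}{m}(n)$ is independent of $m$ for $(m,n)\in\UPPER{m_0}{n_0}$, its common value must coincide with $I_\COL{A}{m_0}(n)$, so $(m,n)\in A$ iff $n\in\COL{A}{m_0}$ for every such pair. Likewise, the hypothesis on $I_\ROW{A}{n}(m)$ gives $(m,n)\in A$ iff $m\in\ROW{A}{n_0}$. Writing $R=\ROW{A}{n_0}\cap[m_0,\infty)$ and $C=\COL{A}{m_0}\cap[n_0,\infty)$, I would then argue that
\begin{align*}
A\cap\UPPER{m_0}{n_0} = R\times C,
\end{align*}
which is the product structure required by Proposition~\ref{proposition.cartesian.product}.

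With the product structure in hand, I would apply Proposition~\ref{proposition.cartesian.product} to obtain $\dens(A\cap\UPPER{m_0}{n_0})=\dirichlet(R)\,\dirichlet(C)$, and then invoke the standard fact that Dirichlet density is insensitive to the addition or removal of a finite set of positive integers. Since $\ROW{A}{n_0}\setminus R$ and $\COL{A}{m_0}\setminus C$ are finite, one has $\dirichlet(R)=\dirichlet(\ROW{A}{n_0})$ and $\dirichlet(C)=\dirichlet(\COL{A}{m_0})$, and chaining the equalities finishes the argument.

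The main obstacle I anticipate is the middle step: formally arguing that the two axis-independence statements combine consistently into the product decomposition $A\cap\UPPER{m_0}{n_0}=R\times C$, rather than merely constraining each slice separately. Once that structural identification is made, Heavy Tail and Cartesian Product carry the rest essentially as a calculation, and the Dirichlet-density robustness under finite perturbations is a routine appeal.
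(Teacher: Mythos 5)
Your proposal is correct and follows essentially the same route as the paper: apply the Heavy Tail proposition, identify $A\cap\UPPER{m_0}{n_0}$ with a Cartesian product via the independence hypothesis, and finish with the Cartesian Product proposition. The only cosmetic difference is that the paper removes the truncation to $\UPPER{m_0}{n_0}$ by a second appeal to Heavy Tail, whereas you remove it by the finite-perturbation insensitivity of the one-dimensional Dirichlet density; both are immediate.
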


 \begin{proof}
 Because of the assumed independence,
 we can write
 $I_\COL{A}{m}(n) = I_\COL{A}{m_0}(n)$
 and
 $I_\ROW{A}{n}(m) = I_\ROW{A}{n_0}(m)$,
 for
 $m\geq m_0$ and
 $n\geq n_0$, respectively.
 Thus,
 for $m\geq m_0$ and
 $n\geq n_0$,
 the set $A$ is
 indistinguishable of
 $\COL{A}{m_0} \times \ROW{A}{n_0}$.
 But, the heavy tail property implies that
 \begin{align*}
 \dens(A)
 &=
 \dens(A \cap \UPPER{m_0}{n_0})
 \\
 &=
 \dens((\COL{A}{m_0} \times \ROW{A}{n_0}) \cap \UPPER{m_0}{n_0})
 \\
 &=
 \dens(\COL{A}{m_0} \times \ROW{A}{n_0})
 \\
 &=
 \dirichlet(\COL{A}{m_0})
 \dirichlet(\ROW{A}{n_0}).
 \end{align*}

 \end{proof}

Given a set $A$ of Gaussian integers and a Gaussian integer $(m_0,n_0)$,
let $A\oplus (m_0,n_0) \triangleq \{ (m + m_0, n+n_0) \ |\ (m,n)\in A\}$.
This process is called a translation of $A$ by $(m_0,n_0)$ units~\cite[p.~49]{rudin1966}.
Now our goal is to show that the proposed density is translation invariant, i.e.,
$\dens(A \oplus (m_0,n_0)) = \dens(A)$, $m_0\geq0$ and $n_0\geq 0$.
However, the proof that we will supply requires the following lemma.

\begin{lemma}[Unitary Translation]
Let $A$ be a set of Gaussian integers, such as $\dens(A)>0$.
Then
\begin{align*}
\dens(A \oplus (1,0))=\dens(A \oplus (0,1))= \dens(A).
\end{align*}
\end{lemma}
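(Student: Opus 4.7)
The plan is to reformulate both densities as Dirichlet-type series indexed by $A$ itself, and then control their difference. Since $I_{\ROW{A}{n}}(m)\,I_{\COL{A}{m}}(n)$ is simply $I_{A}(m,n)$, the defining expression reduces to
\begin{align*}
\dens(A) = \lim_{s\downarrow 1}\frac{1}{\zeta^{2}(s)}\sum_{(m,n)\in A}\frac{1}{(mn)^{s}}.
\end{align*}
Re-parametrizing $A\oplus(1,0)$ via $(m,n)\mapsto(m+1,n)$ on $A$ yields
\begin{align*}
\dens(A\oplus(1,0)) = \lim_{s\downarrow 1}\frac{1}{\zeta^{2}(s)}\sum_{(m,n)\in A}\frac{1}{((m+1)n)^{s}}.
\end{align*}

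The next step is to subtract these two expressions inside the limit and bound the resulting nonnegative quantity
\begin{align*}
\Delta(s) \triangleq \frac{1}{\zeta^{2}(s)}\sum_{(m,n)\in A}\frac{1}{n^{s}}\left(\frac{1}{m^{s}}-\frac{1}{(m+1)^{s}}\right).
\end{align*}
Applying the mean value theorem to $f(x)=x^{-s}$ on $[m,m+1]$ gives $m^{-s}-(m+1)^{-s}\leq s\,m^{-s-1}$. Relaxing the sum over $A$ to a sum over all of $\positiveset^{2}$ then yields
\begin{align*}
0 \leq \Delta(s) \leq \frac{s}{\zeta^{2}(s)}\sum_{m=1}^{\infty}\frac{1}{m^{s+1}}\sum_{n=1}^{\infty}\frac{1}{n^{s}} = \frac{s\,\zeta(s+1)}{\zeta(s)}.
\end{align*}

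To conclude, observe that as $s\downarrow 1$ the numerator $\zeta(s+1)$ tends to $\zeta(2)<\infty$ while the denominator $\zeta(s)$ diverges. Hence the upper bound vanishes, forcing $\Delta(s)\to 0$ by squeeze, since the summands are all nonnegative. This proves $\dens(A\oplus(1,0))=\dens(A)$, and the equality $\dens(A\oplus(0,1))=\dens(A)$ follows at once by interchanging the roles of $m$ and $n$ in the argument.

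The main technical point, and the only place where anything subtle happens, is producing the extra factor of $m^{-1}$ via the mean value theorem: it is precisely what converts the diverging factor $\zeta(s)$ in the relaxed sum into the finite limit $\zeta(s+1)$, allowing the denominator $\zeta^{2}(s)$ to annihilate the whole difference. Note that the hypothesis $\dens(A)>0$ plays no role in this bounding argument; it is presumably stated to align with the setting in which the subsequent translation-invariance result will be applied.
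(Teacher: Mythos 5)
Your proposal is correct and follows essentially the same route as the paper: both establish nonnegativity of the termwise difference, bound $m^{-s}-(m+1)^{-s}$ by $s\,m^{-s-1}$ (the paper via an integral comparison, you via the mean value theorem — the same estimate), and show the relaxed bound vanishes after dividing by $\zeta^{2}(s)$ because the $m$-series converges at $s=1$ while $\zeta(s)$ diverges. The only cosmetic difference is that the paper relaxes the $m$-sum to $\suph(A)$ rather than to all of $\positiveset$, and your closing remark that the hypothesis $\dens(A)>0$ is not actually used is accurate.
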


\begin{proof}
It suffices to show that $\dens(A \oplus (1,0)) = \dens(A)$,
being the other case analogous.
First, note that since
\begin{align*}
\sum_{(m,n)\in A}\frac{1}{(mn)^s}-\sum_{(m,n)\in A}\frac{1}{((m+1)n)^s}\geq 0,
\end{align*}
it follows that $\dens(A)-\dens(A\oplus (1,0))\geq 0$.
Also observe that
\begin{align*}
\frac{s}{m^{s+1}}\geq \int_{m}^{m+1}\frac{s}{x^{s+1}}\mathrm{d}x
=
\frac{1}{m^s}-\frac{1}{(m+1)^s}\geq 0.
\end{align*}
Thus, we have that
\begin{align*}
\sum_{(m,n)\in A}\frac{1}{(mn)^s}-\sum_{(m,n)\in A}\frac{1}{((m+1)n)^s}
=&
\sum_{(m,n)\in A}\frac{1}{n^s}\left(\frac{1}{m^s}-\frac{1}{(m+1)^s}\right)
\\
\leq&
\sum_{(m,n)\in A}\frac{1}{n^s}\frac{s}{m^{s+1}}
\\
\leq&
\sum_{n\in \positiveset}\frac{1}{n^s}\sum_{m\in \suph(A)}\frac{s}{m^{s+1}}.
\end{align*}
Dividing both sides by $\zeta^2(s)$ and letting $s\downarrow 1$,
since the last series is convergent as $s\downarrow 1$,
yields
\begin{align*}
\dens(A)-\dens(A\oplus (1,0))\leq 0.
\end{align*}

\end{proof}

\begin{proposition}[Translation Invariance]
\label{proposition.translatioin.invariance}
Let $A$ be a set of Gaussian integers.
Then
\begin{align*}
\dens(A \oplus (m,n)) = \dens(A),
\end{align*}
where $m$ and $n$ are nonnegative integers.
\end{proposition}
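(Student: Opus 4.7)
The plan is to reduce the general translation to repeated unit translations and apply the preceding lemma inductively. Observe first that translation is additive in the shift vector: for any nonnegative integers $a, b, c, d$, one has $(A \oplus (a,b)) \oplus (c,d) = A \oplus (a+c, b+d)$. In particular, $A \oplus (m,n)$ is obtained from $A$ by performing $m$ successive translations by $(1,0)$ followed by $n$ successive translations by $(0,1)$.

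Proceeding by induction on $m + n$, the base case $m = n = 0$ is immediate since $A \oplus (0,0) = A$. For the inductive step, suppose $m + n \geq 1$ and, without loss of generality, $m \geq 1$. Then
\[
\dens(A \oplus (m, n)) = \dens\bigl((A \oplus (m-1, n)) \oplus (1, 0)\bigr).
\]
Applying the unitary translation lemma to the set $A \oplus (m-1, n)$, the right-hand side equals $\dens(A \oplus (m-1, n))$, which by the inductive hypothesis equals $\dens(A)$. An entirely symmetric argument handles the case $n \geq 1$, so the induction closes.

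The one subtlety, and the main obstacle, is that the unitary translation lemma was stated with the hypothesis $\dens(A) > 0$. To cover the remaining case $\dens(A) = 0$, I would inspect the first half of that lemma's proof: the inequality
\[
\sum_{(m,n)\in A}\frac{1}{(mn)^s} - \sum_{(m,n)\in A}\frac{1}{((m+1)n)^s} \geq 0
\]
is established without any positivity hypothesis on $A$, and dividing by $\zeta^2(s)$ and letting $s \downarrow 1$ yields $\dens(A \oplus (1,0)) \leq \dens(A)$ unconditionally. Combined with the non-negativity of the density from Proposition~\ref{proposition.facts}(i), this forces $\dens(A \oplus (1,0)) = 0$ whenever $\dens(A) = 0$; the analogous bound holds for $(0,1)$-translations. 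Hence the unitary lemma extends to the zero-density case and the induction goes through uniformly, noting that the existence of all intermediate densities is covered by the paper's standing convention.
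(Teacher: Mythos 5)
Your argument is correct and follows essentially the same route as the paper: reduce the general translation to iterated unit translations and invoke the Unitary Translation lemma (the paper does this as an implicit induction via $\dens(A\oplus(m+1,n+1))=\dens(((A\oplus(m,n))\oplus(1,0))\oplus(0,1))$). You go beyond the paper in one useful respect: the paper applies the lemma without comment even though it is stated only under the hypothesis $\dens(A)>0$, whereas you explicitly patch the $\dens(A)=0$ case by observing that the inequality $\dens(A\oplus(1,0))\leq\dens(A)$ holds unconditionally and combining it with nonnegativity, which closes a small gap the paper leaves open.
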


\begin{proof}
We have already proven that $\dens(A \oplus (1,0))=\dens(A \oplus (0,1))=\dens(A)$.
Therefore, we have that
\begin{align*}
\dens(A \oplus (m+1,n+1))
& = \dens(((A \oplus (m,n)) \oplus (1,0)) \oplus (0,1)) \\
&= \dens((A \oplus (m,n)) \oplus (1,0))\\
&= \dens(A \oplus (m,n))\\
&=\dens(A).
\end{align*}
\end{proof}

\begin{corollary}
The proposed density is not $\sigma$-additive.
\end{corollary}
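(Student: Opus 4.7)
The plan is to exhibit a countable collection of pairwise disjoint sets of Gaussian integers for which the density of the union differs from the sum of the individual densities. The natural candidate is the decomposition of $\positiveset^2$ itself into its singleton subsets.

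First I would recall two facts already established in the excerpt. By Proposition~\ref{proposition.facts}(ii) we have $\dens(\positiveset^2) = 1$. Meanwhile, the Corollary on Finite Axis Sections (together with the remark that follows it) asserts that any finite set of Gaussian integers, and in particular any singleton $\{(m,n)\}$, has density zero.

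Next I would write the countable disjoint decomposition
\begin{align*}
\positiveset^2 \;=\; \bigcup_{(m,n)\in\positiveset^2} \{(m,n)\},
\end{align*}
where the union is countable because $\positiveset^2$ is countable, and the singletons are pairwise disjoint. If $\dens$ were $\sigma$-additive, then combining the two facts above would force
\begin{align*}
1 \;=\; \dens(\positiveset^2) \;=\; \sum_{(m,n)\in\positiveset^2} \dens(\{(m,n)\}) \;=\; \sum_{(m,n)\in\positiveset^2} 0 \;=\; 0,
\end{align*}
a contradiction. Hence $\sigma$-additivity fails.

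There is no real obstacle in this argument: the work was already done in the earlier results. The only thing to be careful about is that the index set of the decomposition is genuinely countable, which is immediate since $\positiveset^2$ is a countable product of countable sets.
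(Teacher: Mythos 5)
Your argument is correct, but it takes a different route from the paper. The paper's proof cites Propositions~\ref{proposition.facts} and~\ref{proposition.translatioin.invariance}, i.e., it leans on translation invariance: the columns $\{m\}\times\positiveset$, $m=1,2,\ldots$, are pairwise disjoint translates of $\{1\}\times\positiveset$, they partition $\positiveset^2$, and by Proposition~\ref{proposition.translatioin.invariance} they all share a common density $c$; $\sigma$-additivity would force $1=\sum_{m=1}^{\infty}c$, which is impossible whether $c=0$ or $c>0$, so no computation of $c$ is ever needed. You instead decompose $\positiveset^2$ into singletons and invoke the Finite Axis Section corollary (and the remark following it) to get that each singleton is null, whence $1=\sum 0=0$. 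Both arguments are valid. Yours is the more elementary one, bypassing the Unitary Translation lemma and Proposition~\ref{proposition.translatioin.invariance} entirely, at the cost of needing the specific fact that singletons have density zero; the paper's version is the Vitali-style argument, which is more robust in that translation invariance plus finite additivity and normalization already rule out $\sigma$-additivity without knowing the value of the density of any piece.
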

\begin{proof}
This result follows directly from Propositions~\ref{proposition.facts}
and~\ref{proposition.translatioin.invariance}.
\end{proof}

Now consider the set operation defined as
$(a,b) \otimes A \triangleq \{ (am,bn) \ |\ (m,n)\in A\}$, where $(a,b)$ is a Gaussian integer.
This construction can be interpreted as
a dilation on the elements of $A$.
The following proposition relates the density of a set of Gaussian integers
with the density of its dilated form.

\begin{proposition}[Dilation]
\label{proposition.scaling}
Let $A$ be a set of Gaussian integers and let $(a,b)$ be any Gaussian integer.
Then
\begin{align*}
\dens((a,b) \otimes A) = \frac{1}{ab}\dens(A).
\end{align*}
\end{proposition}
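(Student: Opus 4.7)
The plan is to proceed directly from the definition of $\dens$ by a change of summation variable. First, I would observe that the product $I_{\ROW{A}{n}}(m)\,I_{\COL{A}{m}}(n)$ appearing in the definition is just the indicator of the event $(m,n)\in A$, since both factors simultaneously equal $1$ precisely when $(m,n)\in A$. Thus the defining expression can be rewritten in the compact form
\begin{align*}
\dens(A) \;=\; \lim_{s\downarrow 1}\;\frac{1}{\zeta^2(s)}\sum_{(m,n)\in A}\frac{1}{(mn)^s},
\end{align*}
which is exactly the form already used in the proof of Proposition~\ref{proposition.cartesian.product}.

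Next, I would apply this reformulation to $(a,b)\otimes A$. Since $a,b\in\positiveset$, the assignment $(m,n)\mapsto(am,bn)$ is a bijection from $A$ onto $(a,b)\otimes A$, so we may reparametrize the sum. Writing $m'=am$ and $n'=bn$, we obtain
\begin{align*}
\sum_{(m',n')\in (a,b)\otimes A}\frac{1}{(m'n')^s}
\;=\;\sum_{(m,n)\in A}\frac{1}{(am\cdot bn)^s}
\;=\;\frac{1}{(ab)^s}\sum_{(m,n)\in A}\frac{1}{(mn)^s}.
\end{align*}

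Dividing both sides by $\zeta^2(s)$ and passing to the limit as $s\downarrow 1$, the scalar factor $(ab)^{-s}$ converges to $(ab)^{-1}$, while the remaining ratio converges to $\dens(A)$ by hypothesis. Combining these gives $\dens((a,b)\otimes A)=\tfrac{1}{ab}\dens(A)$, as claimed. There is no real obstacle here; the only thing that requires a moment's care is the initial identification of the indicator product with the set-membership indicator for $A$, which legitimizes treating the defining double sum as a single sum over the elements of the set and makes the change of variables transparent.
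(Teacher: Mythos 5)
Your proof is correct and follows essentially the same route as the paper's: both rewrite the density of $(a,b)\otimes A$ as a sum over $(m,n)\in A$ via the reparametrization $(m',n')=(am,bn)$, factor out $(ab)^{-s}$, and pass to the limit. Your explicit remarks about the indicator product and the bijectivity of the dilation map are details the paper leaves implicit, but the argument is the same.
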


\begin{proof}
This result follows directly from the definition of the proposed density:
\begin{align*}
\dens((a,b) \otimes A)
&=
\lim_{s\downarrow1}
\frac{\sum_{(m,n)\in A} \frac{1}{(ambn)^s}}
{\zeta^2(s)}
\\
&=
\lim_{s\downarrow1}
\frac{\frac{1}{(ab)^s}\sum_{(m,n)\in A} \frac{1}{(mn)^s}}
{\zeta^2(s)}
\\
&=
\frac{1}{ab}
\dens(A).
\end{align*}
\end{proof}

\section{Density of Particular Sets}

In this section,
we evaluate the density of some particular
sets of Gaussian integers.

\subsection{Cartesian Product of Arithmetic Progressions}

Let $p$ be an integer.
The set $M_{p} = \{m \in \positiveset : m \equiv 0 \pmod{p} \}$ constitutes
an arithmetic progression
with Dirichlet density $\dirichlet(M_p) = 1/p$.
Furthermore,
the Cartesian product of two arithmetic progressions generates a rectangular lattice
denoted by
$M_{(p,q)}\triangleq M_p \times M_q$,
where $p$ and $q$ are positive integers.
Then
it follows from Proposition~\ref{proposition.cartesian.product}
that
$\dens(M_{(p,q)}) = \dirichlet(M_p)\dirichlet(M_q)$.
Let us investigate the density of sets that are
intersections of particular Cartesian products of arithmetic progressions.

\begin{proposition}[Intersection]
\label{proposition.intersection}
For any positive integers $p$, $q$, $s$ and $t$,
we have that
\begin{align*}
\dens(M_{(p,q)} \cap M_{(s,t)})
=
\dens\left(M_{(\lcm(p,s),\lcm(q,t))}\right)
=
\frac{1}{\lcm(p,s)\lcm(q,t)},
\end{align*}
where $\lcm(\cdot,\cdot)$ denotes the least common multiple of its arguments.
\end{proposition}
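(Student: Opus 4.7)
The plan is to reduce the statement to two easy ingredients: an elementary set-theoretic identity about Cartesian products of arithmetic progressions, and the already-proven Proposition on Cartesian products (Proposition~\ref{proposition.cartesian.product}).

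First I would observe that for positive integers $p$ and $s$, an integer is divisible by both $p$ and $s$ if and only if it is divisible by $\lcm(p,s)$; that is, $M_p \cap M_s = M_{\lcm(p,s)}$. Applying this to each coordinate, together with the distributivity of intersection over Cartesian product, yields
\begin{align*}
M_{(p,q)} \cap M_{(s,t)}
&= (M_p \times M_q) \cap (M_s \times M_t) \\
&= (M_p \cap M_s) \times (M_q \cap M_t) \\
&= M_{\lcm(p,s)} \times M_{\lcm(q,t)} \\
&= M_{(\lcm(p,s),\lcm(q,t))}.
\end{align*}
This establishes the first equality at the level of sets, so the densities agree trivially.

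For the second equality, I would invoke Proposition~\ref{proposition.cartesian.product}, which gives $\dens(M_{(p,q)}) = \dirichlet(M_p)\dirichlet(M_q)$ for any positive integers $p,q$. Combined with the already-cited fact that $\dirichlet(M_p) = 1/p$ (an arithmetic progression with common difference $p$ has Dirichlet density $1/p$), applied with $p$ replaced by $\lcm(p,s)$ and $q$ replaced by $\lcm(q,t)$, this yields
\begin{align*}
\dens\bigl(M_{(\lcm(p,s),\lcm(q,t))}\bigr)
= \frac{1}{\lcm(p,s)} \cdot \frac{1}{\lcm(q,t)}
= \frac{1}{\lcm(p,s)\lcm(q,t)},
\end{align*}
closing the proof. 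There is no real obstacle here; the only thing to be careful about is to justify the set-theoretic identity cleanly, since the entire computational content has already been packaged in the previously established Cartesian product proposition and the standard Dirichlet density of an arithmetic progression.
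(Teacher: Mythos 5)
Your proof is correct, and the core set-theoretic step is the same as the paper's: both arguments reduce the problem to the identity $M_{(p,q)} \cap M_{(s,t)} = M_{(\lcm(p,s),\lcm(q,t))}$. Where you diverge is in how that identity is established and in which earlier proposition carries the density computation. The paper writes $M_{(p,q)} = (p,q)\otimes \positiveset^2$ and intersects the two dilated copies of $\positiveset^2$ directly, then applies the Dilation proposition, $\dens((a,b)\otimes A) = \tfrac{1}{ab}\dens(A)$, with $A = \positiveset^2$; this keeps the whole argument inside the two-dimensional framework and needs only $\dens(\positiveset^2)=1$. You instead intersect coordinate-wise via $(M_p \times M_q)\cap(M_s\times M_t) = (M_p\cap M_s)\times(M_q\cap M_t)$ and $M_p\cap M_s = M_{\lcm(p,s)}$, then invoke the Cartesian product proposition together with the one-dimensional fact $\dirichlet(M_p)=1/p$. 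Both routes are sound and both facts you rely on are already stated in the surrounding text, so nothing is missing; the paper's dilation route is marginally more self-contained (it does not lean on the classical Dirichlet density of an arithmetic progression, which the paper asserts but does not prove), while yours makes the reduction to the one-dimensional theory more transparent.
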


\begin{proof}
First, note that
$
M_{(p,q)}=(p,q)\otimes \positiveset^2
$.
Therefore,
\begin{align*}
M_{(p,q)}\cap M_{(s,t)}
&=((p,q)\otimes \positiveset^2)\cap ((s,t)\otimes \positiveset^2)\nonumber\\
&=(\lcm(p,s),\lcm(q,t))\otimes \positiveset^2.
\end{align*}
Applying $\dens(\cdot)$ on both sides of above equation
and
invoking the dilation property,
we obtain the desired result.

\end{proof}

\begin{corollary}
\label{proposition.m.independence}
Let $(m,n)$ be a Gaussian integer.
Admit also that $\gcd(p,s)=1$ and $\gcd(q,t)=1$,
where $\gcd(\cdot,\cdot)$ returns the greatest common divisor of its arguments.
Then
\begin{align*}
\dens(M_{(mp,nq)} \cap M_{(ms,nt)}) = \frac{1}{mn} \dens(M_{(p,q)} \cap M_{(s,t)}).
\end{align*}
\end{corollary}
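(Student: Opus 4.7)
The plan is to reduce both sides to closed forms via the Intersection Proposition and then compare. First I would apply Proposition~\ref{proposition.intersection} directly to the left-hand side, obtaining
\begin{align*}
\dens(M_{(mp,nq)} \cap M_{(ms,nt)}) = \frac{1}{\lcm(mp,ms)\,\lcm(nq,nt)}.
\end{align*}
The key algebraic identity I would then invoke is the homogeneity of the least common multiple, namely $\lcm(ma, mb) = m\,\lcm(a,b)$ for any positive integer $m$, which follows immediately from the prime factorization characterization of $\lcm$. This yields $\lcm(mp,ms) = m\,\lcm(p,s)$ and $\lcm(nq,nt) = n\,\lcm(q,t)$.

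Next, I would use the coprimality hypotheses $\gcd(p,s)=1$ and $\gcd(q,t)=1$, which give $\lcm(p,s) = ps$ and $\lcm(q,t) = qt$. Substituting into the previous display,
\begin{align*}
\dens(M_{(mp,nq)} \cap M_{(ms,nt)}) = \frac{1}{m\,n\,ps\,qt} = \frac{1}{mn}\cdot\frac{1}{\lcm(p,s)\,\lcm(q,t)}.
\end{align*}
Finally, applying Proposition~\ref{proposition.intersection} a second time, now to the right-hand side factor, identifies $\frac{1}{\lcm(p,s)\lcm(q,t)}$ with $\dens(M_{(p,q)} \cap M_{(s,t)})$, completing the argument.

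There is essentially no obstacle here: the statement is a direct corollary, and the only nontrivial ingredient is the scaling identity for $\lcm$, which is elementary. The role of the coprimality assumptions is merely to let one write $\lcm(p,s)\lcm(q,t)$ as $psqt$ so that the factor $\frac{1}{mn}$ separates cleanly; without coprimality, the identity $\lcm(ma,mb) = m\lcm(a,b)$ still holds, so one could even obtain a slightly more general statement, but the clean form displayed in the corollary requires the stated hypotheses. An alternative route would bypass Proposition~\ref{proposition.intersection} entirely and apply the dilation property (Proposition~\ref{proposition.scaling}) to the identity $M_{(mp,nq)} \cap M_{(ms,nt)} = (m,n) \otimes (M_{(p,q)} \cap M_{(s,t)})$, which holds precisely under the coprimality assumptions; this gives the result in one line, and I would mention it as a shorter alternative proof.
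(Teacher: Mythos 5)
Your proof is correct and follows the same route the paper intends: the paper's own proof is just ``follows directly from Proposition~\ref{proposition.intersection}'', and your expansion via $\lcm(ma,mb)=m\lcm(a,b)$ is exactly the missing computation. One small quibble with your closing remark: since that homogeneity identity already gives $\lcm(mp,ms)\lcm(nq,nt)=mn\,\lcm(p,s)\lcm(q,t)$ unconditionally, the stated equality (and your one-line dilation alternative) holds even \emph{without} the coprimality hypotheses, so they are not needed for the ``clean form'' either.
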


\begin{proof}
Follows directly from Proposition~\ref{proposition.intersection}.
\end{proof}

\subsection{Sets Delimited by Functions}

Let us consider a set of Gaussian integers
defined as
$
C = \{ (m,n) \in \positiveset^2 : f(m) \leq n \leq g(m) \},
$
where $f(\cdot)$ and $g(\cdot)$ are
functions
such that
$g(m)\geq f(m)\geq 1$
for every integer $m$.
Functions $f$ and $g$ delimit the set $C$,
confining the set elements in between.
Figure~\ref{fig.cone}
illustrates a possible configuration for the set $C$.
By definition,
the proposed density of $C$ is given by
\begin{align*}
\dens(C)=
\lim_{s\downarrow1}
\frac{1}{\zeta^2(s)}
\sum_{m=1}^\infty
\frac{1}{m^s}
\sum_{n = \lceil  f(m) \rceil}^{\lfloor  g(m) \rfloor}
\frac{1}{n^s},
\end{align*}
where $\lceil \cdot \rceil$ and $\lfloor \cdot \rfloor$
represent the usual ceiling and floor functions, respectively.

\begin{figure}[h]
\centering
\input{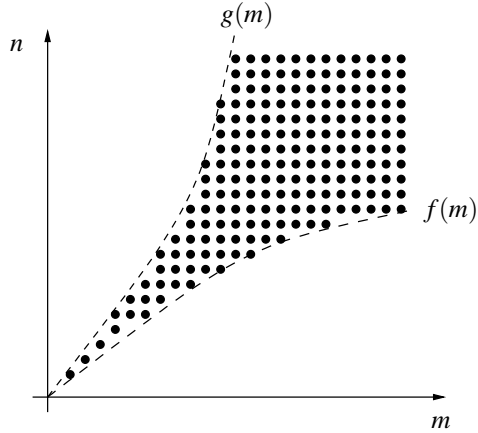}
\caption{A set upper and lower bounded by two functions.}
\label{fig.cone}
\end{figure}

Let us establish upper and lower bounds for the double summation.
Initially,
notice that the inner summation satisfies the following
bounds:
\begin{align*}
\sum_{n = \lceil  f(m) \rceil}^{\lfloor  g(m) \rfloor}
\frac{1}{n^s}
&=
\frac{1}{\lceil  f(m) \rceil^s}
+
\sum_{n = \lceil  f(m) \rceil + 1}^{\lfloor  g(m) \rfloor}
\frac{1}{n^s}
\\
&\leq
1
+
\int_{\lceil  f(m) \rceil}^{\lfloor  g(m) \rfloor}
\frac{1}{x^s}
\mathrm{d}{x}
\\
&\leq
1
+
\int_{ f(m)}^{ g(m)}
\frac{1}{x^s}
\mathrm{d}{x}
\\
&=
1
+
\frac{1}{-s+1}
\left(  g(m)^{-s+1} -  f(m)^{-s+1}
\right).
\end{align*}
Thus,
an upper bound for the double summation
is expressed by
\begin{align*}
\sum_{m=1}^\infty
\frac{1}{m^s}
\sum_{n = \lceil  f(m) \rceil}^{\lfloor  g(m) \rfloor}
\frac{1}{n^s}
&\leq
\sum_{m=1}^\infty
\frac{1}{m^s}
\left(
1
+
\frac{1}{-s+1}
\left(  g(m)^{-s+1} -  f(m)^{-s+1}
\right)
\right)
\\
&=
\zeta(s)
+
\frac{1}{-s+1}
\sum_{m=1}^\infty
\frac{1}{m^s}
\left(
g(m)^{-s+1} -  f(m)^{-s+1}
\right).
\end{align*}
Performing analogous manipulations,
we obtain the following lower bound for the inner summation:
\begin{align*}
\sum_{n = \lceil  f(m) \rceil}^{\lfloor  g(m) \rfloor}
\frac{1}{n^s}
&\geq
-\frac{1}{(\lceil  f(m) \rceil -1)^s}
+
\int_{\lceil  f(m)\rceil-1}^{\lfloor  g(m)\rfloor +1} \frac{1}{x^s} \mathrm{d}x
\\
&\geq
-1
+
\int_{ f(m)}^{ g(m)} \frac{1}{x^s} \mathrm{d}x
\\
&=
-1
+
\frac{1}{-s+1}
\left(
g(m)^{-s+1} - f(m)^{-s+1}
\right).
\end{align*}
This implies that the double summation is
lower bounded by:
\begin{align*}
\sum_{m=1}^\infty
\frac{1}{m^s}
\sum_{n = \lceil  f(m) \rceil}^{\lfloor  g(m) \rfloor}
\frac{1}{n^s}
&\geq
\sum_{m=1}^\infty
\frac{1}{m^s}
\left(
-1
+
\frac{1}{-s+1}
\left(
g(m)^{-s+1} - f(m)^{-s+1}
\right)
\right)
\\
&=
-\zeta(s)
+
\frac{1}{-s+1}
\sum_{m=1}^\infty
\frac{1}{m^s}
\left(
g(m)^{-s+1} -  f(m)^{-s+1}
\right).
\end{align*}

The upper and lower bounds present similar
formulations,
inviting an application of the squeeze theorem.
Thus,
after dividing both
expressions by $\zeta^2(s)$ and taking
the limit as $s\downarrow1$,
minor manipulations furnish
\begin{align*}
\dens(C)
=
\lim_{s\downarrow1}
\frac{1}{\zeta(s)}
\sum_{m=1}^\infty
\frac{1}{m^s}
\left(
f(m)^{-s+1} - g(m)^{-s+1}
\right).
\end{align*}

Now let us analyze the density of a set $C$ in the light of the asymptotic behavior of
the delimiting functions.

\begin{proposition}
[Asymptotics]
Let
$u(m)$ and $v(m)$ be delimiting functions that are always greater or equal to one.
If $f(m) = \Theta(u(m))$ and $g(m) = \Theta(v(m))$,
then
\begin{align}
\label{density.cone}
\dens(C)
=
\lim_{s\downarrow1}
\frac{1}{\zeta(s)}
\sum_{m=1}^\infty
\frac{1}{m^s}
\left(
u(m)^{-s+1} - v(m)^{-s+1}
\right).
\end{align}
\end{proposition}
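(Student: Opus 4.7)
The plan is to leverage the exact formula for $\dens(C)$ already derived in the exposition preceding this proposition, namely
\begin{align*}
\dens(C)
=
\lim_{s\downarrow1}
\frac{1}{\zeta(s)}
\sum_{m=1}^\infty
\frac{1}{m^s}
\left(
f(m)^{-s+1} - g(m)^{-s+1}
\right),
\end{align*}
and then argue that replacing $f$ by $u$ and $g$ by $v$ introduces an error that vanishes as $s\downarrow 1$. By linearity, it suffices to show that $\frac{1}{\zeta(s)}\sum_{m\ge 1} \frac{1}{m^s}\bigl(f(m)^{-s+1}-u(m)^{-s+1}\bigr) \to 0$ and the analogous statement for the pair $(g,v)$.

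The $\Theta$-hypothesis supplies constants $0<c_1\le c_2$ and an index $m_0$ with $c_1 u(m)\le f(m)\le c_2 u(m)$ for all $m\ge m_0$. Since $-s+1<0$ for $s>1$, taking this power reverses the inequalities and yields $c_2^{-s+1}u(m)^{-s+1} \le f(m)^{-s+1} \le c_1^{-s+1}u(m)^{-s+1}$, so that
\begin{align*}
\bigl|f(m)^{-s+1}-u(m)^{-s+1}\bigr|
\le
\max\bigl(|c_1^{-s+1}-1|,\,|c_2^{-s+1}-1|\bigr)\,u(m)^{-s+1}.
\end{align*}
Because $u(m)\ge 1$ and $-s+1<0$, we have $u(m)^{-s+1}\le 1$, so this upper bound reduces to a single function $\delta(s)\triangleq \max(|c_1^{-s+1}-1|,|c_2^{-s+1}-1|)$, which depends only on $s$ and tends to $0$ as $s\downarrow 1$ by continuity of $c\mapsto c^{-s+1}$ at $s=1$.

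Splitting the sum at $m_0$, the tail is controlled by
\begin{align*}
\sum_{m\ge m_0}\frac{1}{m^s}\bigl|f(m)^{-s+1}-u(m)^{-s+1}\bigr|
\le
\delta(s)\,\zeta(s),
\end{align*}
so that after division by $\zeta(s)$ the tail contributes at most $\delta(s)\to 0$. The head $\sum_{m<m_0}$ is a finite sum whose summands are uniformly bounded as $s\downarrow 1$ (each $f(m),u(m)\ge 1$, so the expressions $f(m)^{-s+1}$ and $u(m)^{-s+1}$ stay in a bounded interval); dividing by $\zeta(s)\to\infty$ kills this contribution. Repeating the same argument with $(g,v)$ in place of $(f,u)$ and combining completes the proof.

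The only nontrivial point I expect to encounter is verifying that the multiplicative $\Theta$-comparison genuinely passes through the vanishing exponent $-s+1$ uniformly in $m$, and this is exactly what the observation $u(m)^{-s+1}\le 1$ buys us: it converts a multiplicative bound depending on $m$ into a uniform additive bound $\delta(s)$, after which the $\zeta(s)$ factors cancel cleanly via the squeeze argument already used to derive the explicit formula for $\dens(C)$.
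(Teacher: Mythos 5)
Your proof is correct, and it rests on the same two ingredients as the paper's: the $\Theta$-constants enter only through factors $c^{-s+1}\to 1$, and the finite head $m<m_0$ is annihilated by dividing by $\zeta(s)$. The organization differs slightly: the paper sandwiches the whole expression $f(m)^{-s+1}-g(m)^{-s+1}$ between $(c_2u(m))^{-s+1}-(c_3v(m))^{-s+1}$ and $(c_1u(m))^{-s+1}-(c_4v(m))^{-s+1}$ and then shows both bounds share the target limit by factoring $k^{-s+1}$ out of the sum, whereas you decompose additively, treating the replacements $f\mapsto u$ and $g\mapsto v$ as separate error terms and bounding each by a uniform quantity $\delta(s)\to 0$ via the observation $u(m)^{-s+1}\le 1$. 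Your version makes explicit a point the paper leaves implicit: when the paper asserts that $k^{-s+1}\to 1$ lets it drop the constant, it is tacitly using that $\frac{1}{\zeta(s)}\sum_{m\ge m_0}u(m)^{-s+1}m^{-s}$ stays bounded, which is exactly the bound $u(m)^{-s+1}\le 1$ you invoke; so your argument is a marginally more careful rendering of the same idea rather than a genuinely different route.
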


\begin{proof}
By the definition of the $\Theta$-notation~\cite[p.~434]{graham1989},
there exist a quantity $m_0$,
such that,
for every $m\geq m_0$,
both functions $f$ and $g$ satisfy:
\begin{align*}
c_1 u(m) \leq f(m) \leq c_2 u(m), \\
c_3 v(m) \leq g(m) \leq c_4 v(m),
\end{align*}
where $c_1$, $c_2$, $c_3$, and $c_4$ are positive constants.
Moreover,
notice that
\begin{align*}
\dens(C)
=&
\lim_{s\downarrow1}
\frac{1}{\zeta(s)}
\sum_{m=1}^\infty
\frac{1}{m^s}
\left(
f(m)^{-s+1} - g(m)^{-s+1}
\right)
\\
=&
\lim_{s\downarrow1}
\frac{1}{\zeta(s)}
\left[
\sum_{m=1}^{m_0-1}
\frac{1}{m^s}
\left(
f(m)^{-s+1} - g(m)^{-s+1}
\right)
\right]
\\
&+
\lim_{s\downarrow1}
\frac{1}{\zeta(s)}
\left[
\sum_{m=m_0}^\infty
\frac{1}{m^s}
\left(
f(m)^{-s+1} - g(m)^{-s+1}
\right)
\right]
\\
=&
\lim_{s\downarrow1}
\frac{1}{\zeta(s)}
\sum_{m=m_0}^\infty
\frac{1}{m^s}
\left(
f(m)^{-s+1} - g(m)^{-s+1}
\right).
\end{align*}
Thus, for $m\geq m_0$,
we have that
\begin{align*}
\sum_{m=m_0}^\infty
&\frac{1}{m^s}
\left(
(c_2 u(m))^{-s+1} - (c_3 v(m))^{-s+1}
\right)
\\
&\leq
\sum_{m=m_0}^\infty
\frac{1}{m^s}
\left(
f(m)^{-s+1} - g(m)^{-s+1}
\right)
\\
&\leq
\sum_{m=m_0}^\infty
\frac{1}{m^s}
\left(
(c_1 u(m))^{-s+1} - (c_4 v(m))^{-s+1}
\right).
\end{align*}
Now we show that after dividing by $\zeta(s)$ and letting $s\downarrow1$,
both upper and lower bounds above have the same limit.
Since $u(m)\geq 1$ and $v(m)\geq 1$,
it follows that for arbitrary positive constants $k_1$ and $k_2$:
\begin{align*}
\sum_{m=m_0}^{\infty}
\frac{1}{m^s}
&
((k_1u(m))^{-s+1}-(k_2v(m))^{-s+1})
=
\\
&k_1^{-s+1}
\sum_{m=m_0}^{\infty}
\frac{1}{m^s}
u(m)^{-s+1}
-
k_2^{-s+1}
\sum_{m=m_0}^{\infty}
\frac{1}{m^s}
v(m)^{-s+1}.
\end{align*}
Thus,
since both $k_1^{-s+1}$ and $k_2^{-s+1}$ tend to one as $s\downarrow 1$,
we have that
\begin{align*}
\lim_{s\downarrow 1}\frac{1}{\zeta(s)}
\sum_{m=m_0}^{\infty}\frac{1}{m^s}((k_1u(m))^{-s+1}-(k_2v(m))^{-s+1})\nonumber\\
=
\lim_{s\downarrow 1}\frac{1}{\zeta(s)}
\sum_{m=m_0}^{\infty}\frac{1}{m^s}(u(m)^{-s+1}-v(m)^{-s+1}).
\end{align*}
Therefore, we maintain that
\begin{align*}
&\lim_{s\downarrow1}
\frac{1}{\zeta(s)}
\sum_{m=m_0}^\infty
\frac{1}{m^s}
\left(
f(m)^{-s+1} - g(m)^{-s+1}
\right)
\\
&=
\lim_{s\downarrow1}
\frac{1}{\zeta(s)}
\sum_{m=m_0}^\infty
\frac{1}{m^s}
\left(
u(m)^{-s+1} - v(m)^{-s+1}
\right).
\end{align*}
Finally,
since
\begin{align*}
\lim_{s\downarrow1}
\frac{1}{\zeta(s)}
\sum_{m=1}^{m_0-1}
\frac{1}{m^s}
\left(
u(m)^{-s+1} - v(m)^{-s+1}
\right)
=
0,
\end{align*}
the proposition is proven.
\end{proof}

We now supply two examples.
But,
the following lemma is needed before.
\begin{lemma}
For $\alpha\geq0$,
$
\lim_{s\downarrow1} \zeta((\alpha+1)s - \alpha)(s-1) = (1+\alpha)^{-1}.
$

\end{lemma}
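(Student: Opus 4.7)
The plan is to reduce this to the standard fact that $\zeta(s)$ has a simple pole at $s=1$ with residue one, namely $\lim_{s\to 1}(s-1)\zeta(s)=1$. The argument of $\zeta$ in the statement is the affine function $t(s) = (\alpha+1)s - \alpha$, which satisfies $t(1) = 1$, so as $s \downarrow 1$ we also have $t \downarrow 1$, meaning we are probing $\zeta$ near its pole.

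First I would perform the substitution $t = (\alpha+1)s - \alpha$ and observe the algebraic identity $t - 1 = (\alpha+1)(s-1)$, which can be rewritten as $s-1 = (t-1)/(\alpha+1)$. Substituting this into the expression inside the limit yields
\begin{align*}
\zeta\bigl((\alpha+1)s - \alpha\bigr)(s-1) = \frac{1}{\alpha+1}\,(t-1)\zeta(t).
\end{align*}
Since $t \downarrow 1$ as $s \downarrow 1$, and since $(t-1)\zeta(t) \to 1$ by the pole structure of the Riemann zeta function, the right-hand side tends to $1/(\alpha+1)$, which is exactly the claim.

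There is no real obstacle here: the entire proof is essentially the change of variables $t = (\alpha+1)s - \alpha$ combined with the well-known residue of $\zeta$ at $s=1$. The hypothesis $\alpha \geq 0$ only ensures that $\alpha+1 > 0$, so that the affine map is orientation-preserving and $t$ approaches $1$ from the right as $s$ does, keeping us on the real axis in the region where $\zeta$ admits its standard Laurent expansion.
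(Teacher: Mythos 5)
Your proof is correct and follows essentially the same route as the paper: the substitution $t=(\alpha+1)s-\alpha$, the identity $s-1=(t-1)/(\alpha+1)$, and the standard fact that $\lim_{t\downarrow 1}(t-1)\zeta(t)=1$. The paper's version is slightly more terse (it rewrites $s-1$ as $\frac{t+\alpha}{1+\alpha}-1$ before simplifying), but the argument is identical in substance.
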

\begin{proof}
Taking into account the substitution $t = (\alpha+1)s-\alpha$,
it follows that:
\begin{align*}
\lim_{s\downarrow1} \zeta((\alpha+1)s - \alpha)(s-1)
&=
\lim_{t\downarrow1} \zeta(t)\left(\frac{t+\alpha}{1+\alpha}-1\right)
\\
&=\lim_{t\downarrow1} \zeta(t)\frac{t-1}{1+\alpha}
\\
&=\frac{1}{1+\alpha}.
\end{align*}
\end{proof}

\begin{example}
Let us examine the density of the set
$C_\text{pow} = \{ (m,n) \in \positiveset^2 : f(m) \leq n \leq g(m) \}$,
where $g(m) = \Theta(m^\beta)$ and $f(m) = \Theta(m^\alpha)$,
for real quantities
$\beta\geq\alpha>0$.
In order to compute such density we need the previous lemma.
\noindent
Thus,
by invoking Equation~\ref{density.cone},
it follows that the sought density is given by
\begin{align*}
\dens(C_\text{pow})
&=
\lim_{s\downarrow1}
\frac{1}{\zeta(s)}
\sum_{m=1}^\infty
\frac{1}{m^s}
\left(
(m^\alpha)^{(-s+1)}
-
(m^\beta)^{(-s+1)}
\right)
\\
&=
\lim_{s\downarrow1}
\frac{1}{\zeta(s)}
(
\zeta((\alpha+1)s-\alpha)
-
\zeta((\beta+1)s-\beta)
)
\\
&=
\frac{1}{1+\alpha}
-
\frac{1}{1+\beta}.
\end{align*}

In particular, if $\alpha \beta  = 1$,
we have
$\dens(C_\text{pow}) = \frac{\beta - 1}{\beta + 1}$.
\end{example}

\begin{example}
Consider the set
$C_\text{exp}=
\{(m,n)\in\positiveset^2 :
n \leq g(m) \}$,
where $g(m) = \Theta(a^m)$,
for $a>1$.
Thus,
by Equation~\ref{density.cone},
\begin{align*}
\dens(C_\text{exp})= \lim_{s\downarrow1}\frac{1}{\zeta(s)}\left(\zeta(s) - \sum_{m=1}^{\infty}\frac{(a^m)^{-s+1}}{m^s}\right).
\end{align*}
Then, note that for each $\beta>0$,
there is a quantity $M$ such that $m\geq M$ implies that $a^m \geq m^\beta$.
By Example 1, we know that
\begin{align*}
\lim_{s\downarrow 1}\frac{1}{\zeta(s)}\sum_{m=1}^{\infty}\frac{(m^{\beta})^{-s+1}}{m^s}=\frac{1}{1+\beta}.
\end{align*}
Moreover, since $\lim_{s\downarrow 1}\frac{1}{\zeta(s)}\sum_{m=1}^{M-1}\frac{(m^{\beta})^{-s+1}}{m^s}=0$, it follows that
\begin{align*}
\lim_{s\downarrow 1}\frac{1}{\zeta(s)}\sum_{m=M}^{\infty}\frac{(m^{\beta})^{-s+1}}{m^s}=\frac{1}{1+\beta}.
\end{align*}
Thus,
\begin{align*}
\dens(C_\text{exp})
&\geq
\lim_{s\downarrow1}
\frac{1}{\zeta(s)}
\left(
\zeta(s) - (\sum_{m=1}^{M-1}\frac{(a^m)^{-s+1}}{m^s}+\sum_{m=M}^{\infty}\frac{(m^{\beta})^{-s+1}}{m^s})
\right)
\\
&=
1-
\left(0+\frac{1}{1+\beta}\right)
=
\frac{\beta}{\beta+1}.
\end{align*}
Finally, letting $\beta\to\infty$
yields
$\dens(C_\text{exp}) = 1$.
\end{example}

\section*{Acknowledgements}

The
second
author acknowledges partial financial support from
the
Department of Foreign Affairs and International Trade (DFAIT), Canada,
and
the
\emph{Conselho Nacional de Desenvolvimento Cient\'ifico e Tecnol\'ogico}
(CNPq), Brazil.

{\small
\bibliographystyle{IEEEtran}
\bibliography{ref}
}

\end{document}